\numberwithin{equation}{section}
\newtheorem{theorem}{Theorem}[section]
\newtheorem{lemma}[theorem]{Lemma}
\newtheorem{proposition}[theorem]{Proposition}
\theoremstyle{definition}
\renewcommand{\Re}{\mathrm{Re}\hspace{1pt}}
\renewcommand{\Im}{\mathrm{Im}\hspace{1pt}}
\title[Joint universality for the Riemann zeta-function]{Joint Universality for the Riemann zeta-function with general shifts}
\author[K. Nakai]{Keita Nakai}
\date{}
\begin{document}

\begin{abstract}
Laurin\v{c}ikas proposed that whether the universality theorem for the Riemann zeta-function shifted by an exponential function holds or not. 
This problem is solved for more general shifts by Andersson et al. 
In this paper, we generalize their result for the joint universality theorem for the Riemann zeta-function with general shifts using a different approach from their method. 

\end{abstract}

\maketitle

\section{Introduction and the statement of main result}

\subsection{Background}

Let $s = \sigma + it$ be a complex variable. 
The Riemann zeta-function $\zeta(s)$ is defined by the infinite series 
$\sum_{n=1}^{\infty} n^{-s}$ 
in the half plane $\sigma > 1$, 
and can be continued meromorphically to the whole plane $\mathbb{C}$. 
In 1975, Voronin~\cite{Vo} proved the universality property of the Riemann zeta-function. 
The modern statement of the universality theorem is as follows. 

\begin{theorem} [The Universality Theorem]
Let $\mathcal{K}$ be a compact set in the strip $1/2 < \sigma < 1$ with connected complement, and let $f(s)$ be a non-vanishing continuous function on $\mathcal{K}$ that is analytic in the interior of $\mathcal{K}$. Then, for any $\varepsilon > 0$ 
\[
\liminf_{T \to \infty} \frac{1}{T} \mathrm{meas} \left\{\tau \in [0, T] :  \sup_{s \in \mathcal{K}} |\zeta(s + i\tau) -f(s)| < \varepsilon \right\} > 0,
\]
where $\mathrm{meas}$ denotes the 1-dimensional Lebesgue measure. 
\end{theorem}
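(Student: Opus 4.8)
The plan is to follow the classical probabilistic strategy, reducing the approximation of $f$ by vertical shifts of $\zeta$ to a density statement about a truncated Euler product. Since $f$ is non-vanishing and continuous on $\mathcal{K}$ and analytic in its interior, and $\mathbb{C} \setminus \mathcal{K}$ is connected, there exists a single-valued branch of $\log f$ with the same regularity; writing $f = e^{g}$, it suffices to make $|\log \zeta(s+i\tau) - g(s)|$ small on $\mathcal{K}$, where the branch of $\log\zeta(s+i\tau)$ is taken so as to be well-defined for most $\tau$. By Mergelyan's theorem I may then replace $g$ by a polynomial $P(s)$, since $\mathcal{K}$ has connected complement; thus the target reduces to an arbitrary polynomial.

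First I would fix a closed disk $D$ with $\mathcal{K} \subset D \subset \{1/2 < \sigma < 1\}$ and work in the Hilbert space $H$ of functions analytic on the interior of $D$ that are square-integrable with respect to area measure (the Bergman space). Second, I would truncate the Euler product: for $\sigma > 1/2$ one has $\log\zeta(s) = \sum_{p} u_p(s) + (\text{tail})$ with $u_p(s) = -\log(1 - p^{-s})$, and I would control the tail in the mean-square sense using a standard mean-value estimate for $\zeta$ on vertical lines, so that outside an exceptional set of $\tau$ of small density the finite sum $\sum_{p \le N} u_p(s + i\tau)$ approximates $\log\zeta(s+i\tau)$ in the norm of $H$.

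The crux is to show that, as the arguments $p^{-i\tau}$ vary, the attainable finite sums $\sum_{p \le N} u_p(s; \theta_p)$, where $u_p(s;\theta) = -\log(1 - e^{-2\pi i \theta} p^{-s})$, form a dense subset of $H$. Here two ingredients combine. The arithmetic input is that $\{\log p : p \text{ prime}\}$ is linearly independent over $\mathbb{Q}$, so by Weyl's criterion the points $(\tau \log p / 2\pi \bmod 1)_{p \le N}$ equidistribute in the torus $(\mathbb{R}/\mathbb{Z})^{\pi(N)}$ as $\tau$ ranges over $[0,T]$; thus realizing a prescribed tuple of phases corresponds to a set of $\tau$ of positive measure. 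The functional-analytic input is a rearrangement theorem of Pecherskii type in Hilbert space: given that $\sum_p \|u_p\|_H^2 < \infty$ while $\sum_p |\langle u_p, v\rangle_H| = \infty$ for every nonzero $v \in H$, the set of sums obtained by admissible rearrangements is dense in $H$. Verifying the divergence hypothesis is where the unboundedness $\sum_p 1/p = \infty$ enters essentially, and I expect this verification — together with checking the precise hypotheses of the rearrangement theorem for the specific vectors $u_p$ — to be the \emph{main obstacle}.

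Finally I would transfer the density statement back to a measure statement: the tuples of phases bringing $\sum_{p\le N} u_p(s;\theta_p)$ within $\varepsilon/2$ of $P(s)$ form a set of positive measure in the torus, hence by equidistribution the corresponding $\tau \in [0,T]$ have positive lower density. Removing the exceptional set from the mean-square truncation step (which has arbitrarily small density by the mean-value estimate) and passing from the $H$-norm back to the sup-norm on the slightly smaller set $\mathcal{K}$ via Cauchy's estimates, I would obtain a set of $\tau$ of positive lower density on which $\sup_{s\in\mathcal{K}}|\zeta(s+i\tau) - f(s)| < \varepsilon$, which is the assertion.
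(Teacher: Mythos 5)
The paper does not prove this statement: it is quoted as Voronin's theorem with the citation \cite{Vo} and used only as background, so there is no in-paper proof to compare against; the closest analogue is the machinery the paper builds for Theorem~\ref{main}. Measured against that, your sketch is a correct outline of the \emph{classical} Voronin--Bagchi argument, but it takes the ``logarithmic'' route, whereas the paper's route (for its generalization) is the limit-theorem route: approximate $\zeta(s+i\tau)$ itself in the mean by a smoothed Dirichlet polynomial $\zeta_X$ (Lemma~\ref{Prob1}), prove weak convergence of the shifted process to the random Euler product $\zeta(s,\omega)$ via equidistribution on the torus (Lemma~\ref{Prob2}, Proposition~\ref{prop1}), identify the support of the limit law, and finish with Mergelyan plus Portmanteau. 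The two approaches are two packagings of the same circle of ideas --- your Pecherskii-type rearrangement step is exactly what hides inside the support computation that the paper imports from \cite{La21Rama} --- but the limit-theorem packaging buys one concrete simplification that your sketch glosses over: by never forming $\log\zeta(s+i\tau)$, it needs no control on the zeros of $\zeta$ in the strip. In your version, ``the branch of $\log\zeta(s+i\tau)$ is taken so as to be well-defined for most $\tau$'' cannot be extracted from a mean-value estimate alone; the classical proof invokes a zero-density theorem ($N(\sigma,T)=o(T)$ for $\sigma>1/2$) to show that the set of $\tau$ for which $\zeta$ vanishes in the relevant disk has density zero. That ingredient is available and standard, so this is an omission of a named input rather than a fatal gap, but together with the verification of the hypotheses of the rearrangement theorem (which you correctly flag as the main obstacle, and where $\sum_p p^{-1}=\infty$ and the linear independence of $\{\log p\}$ both enter) it is where all the real work of the proof lives.
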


\subsection{Universality theorems with shifts}

Kaczorowski, Laurin\v{c}ikas and Steuding~\cite{KLS} introduced the Shifts Universality Principle, which states that  
if the joint universality holds for $F_1(s), \dots$, $F_n(s)$ in the strip $\{s \in \mathbb{C} : \sigma_1 < \sigma < \sigma_2 \}$, then the joint universality holds for $F_1(s + \lambda_1), \dots, F_n(s + \lambda_n)$, where $\lambda_1, \dots, \lambda_n$ satisfy 
\begin{itemize}
    \item $\max_{1 \le j \le n} \Re(\lambda_j) - \min_{1 \le j \le n} \Re(\lambda_j) < \sigma_2 - \sigma_1$, 
    \item $K_k \cap K_j = \emptyset$ for $1 \le j  < k \le n$, where $K_j = \{s + \lambda_j : s \in K \}$ and $K \subset \{s \in \mathbb{C} : \sigma_1 < \sigma < \sigma_2 \}$.
\end{itemize} 
For example, the Dirichlet $L$-functions satisfy the Shift Universality Principle in the strip $1/2 < \sigma < 1$. 

We consider some general shifts for the Riemann zeta-function or $L$-functions. 
In other words, for real valued functions $\gamma_1, \dots, \gamma_r$, we are interested in whether the universality~(\ref{joint universality}) for zeta or $L$-functions $L_j$ holds for any $\varepsilon > 0$ or not? 
\begin{equation}
\liminf_{T \to \infty} \frac{1}{T}\mathrm{meas}\left\{\tau \in [0, T] :  \max_{1 \le j \le r}\sup_{s \in \mathcal{K}} |L_j(s + i\gamma_j(\tau)) -f(s)| < \varepsilon \right\} > 0
\label{joint universality}
\end{equation}

The Shifts Universality Principle gives a positive answer to this question when $\gamma_j(\tau) = \tau - i\lambda_j$. 
Further results regarding a class of functions for which the universality~(\ref{joint universality}) holds have been deduced by Nakamura~\cite{Na09}. It was the first result of extending the joint universality for Dirichlet $L$-functions with shifts $\gamma_j(\tau) = a_j \tau$ with real numbers $a_1, \dots, a_r$ linearly independent over $\mathbb{Q}$.
Moreover, when $r = 2$, further results were obtained in References \cite{Na09}, \cite{Pa09}, and \cite{Pa16}. 
Furthermore, Pa\'{n}kowski~\cite{Pa18} proved that the universality (\ref{joint universality}) holds when $\gamma_j(\tau) = \alpha_j \tau^{a_j} (\log{\tau})^{b_j}$ with $\alpha_j, a_j, b_j$ satisfying certain assumptions. 
Laurin\v{c}ikas and \v{S}iau\v{c}i\={u}nas~\cite{LS} showed a positive answer with shifts $\gamma_j(\tau) = t_{\tau}^{\alpha_j}$, where $t_\tau$ is the Gram function and $\alpha_1, \dots, \alpha_r$ are distinct fixed positive numbers. 

Besides explicitly determined shifts, in References \cite{LMS19}, \cite{La21Rama}, \cite{La21Math} and \cite{La22}, several variances of universality theorems for the Riemann zeta-function have been proven, for which the shifts $\gamma$ satisfy $\gamma'(2\tau)/\gamma'(\tau) \ll 1$ and further restriction. 
For example, in Reference~\cite{La22}, the joint universality for the Riemann zeta-function is proven for the shifts $\gamma_1(\tau) = \tau, \gamma_2(\tau) = \tau^2 + \tau + 1, \dots, \gamma_r(\tau) = \tau^r + \tau^{r-1} + \dots +1$. 
However, when $\gamma_1(\tau) = e^{a_1\tau}, \dots, \gamma_r(\tau) = e^{a_r\tau}$, one cannot prove the joint universality for the Riemann zeta-function using this method. 
This observation provoked Laurin\v{c}ikas to ask that when $r =1$ and $\gamma(\tau) = e^{\tau}$, whether the universality theorem for the Riemann zeta-function holds or not. 
Recently, this question has been solved in Reference~\cite{An} for more general shifts,  
whose approach includes changing of variables for the Lebesgue integral and an application of Voronin’s universality theorem. 
Thus, it is difficult to generalize their method to the joint universality for the Riemann zeta-function with general shifts. 
In this paper, we generalize this universality to the joint universality using an approach different from Reference~\cite{An}. 

\subsection{Main result}
In this subsection, we introduce our results on generalizing the Laurin\v{c}ikas problem. 

Let $\mathcal{F}$ be a set of functions $\gamma$ satisfying following assumptions. 
 
\begin{enumerate}

\item[(F1)] There exist positive real numbers $T_0 > 0$ and $T_1 > 2$ such that $\gamma : [T_0, \infty) \to [T_1, \infty)$ strictly increases. Furthermore, $\gamma(T) \to \infty$ as $T \to \infty$.
\item[(F2)] Every $\gamma$ has continuous derivative and its derivative $\gamma'$ is monotonic in $[T_0, \infty)$.
\item[(F3)] There exists a positive $\alpha$ satisfying $\alpha \gamma(\tau) \le \tau \gamma'(\tau)$ for all $\tau \ge T_0$. 

\end{enumerate}

Furthermore, we say the tuple ($\gamma_1, \dots, \gamma_r)$ with $\gamma_1, \dots, \gamma_r \in \mathcal{F}$ to be admissible if it satisfies that 
for any $(c_1, \dots ,c_r) \in \mathbb{R}^r \setminus \{\bar{0}\}$, there exist $m = m(c_1, \dots, c_r) > 0$ and $T = T(c_1, \dots, c_r) > 0$ such that $c_1 \gamma'_1(\tau) + \dots + c_r \gamma'_r(\tau)$ is monotonic and 
\[
c_1 \gamma'_1(\tau) + \dots + c_r \gamma'_r(\tau) > m\ or\  < -m
\]
for all $\tau > T$. 
A way of constructing and examples of admissible tuples can be found in \ref{ap}. 
Then the main theorem of the present paper is as follows. 

\begin{theorem} \label{main}
Let $(\gamma_1, \dots, \gamma_r)$ be admissible. 
Let $\mathcal{K}$ be a compact set in the strip $1/2 < \sigma < 1$ with connected complement, and let $f_1(s), \dots, f_r(s)$ be a non-vanishing continuous function on $\mathcal{K}$ that is analytic in the interior of $\mathcal{K}$. Then, for any $\varepsilon > 0$, 
\[
\liminf_{T \to \infty} \frac{1}{T} \mathrm{meas} \left\{\tau \in [T, 2T] :  \max_{1 \le j \le r} \sup_{s \in \mathcal{K}} |\zeta(s + i\gamma_j({\tau})) - f_j(s)| < \varepsilon \right\} > 0.
\]

\end{theorem}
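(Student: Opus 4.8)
The plan is to follow the probabilistic (Bagchi-type) route: establish a joint limit theorem for the vector $(\zeta(s+i\gamma_1(\tau)),\dots,\zeta(s+i\gamma_r(\tau)))$ in a space of analytic functions, identify the support of the limiting measure, and then read off the universality statement. Fix a bounded simply connected domain $D$ with $\mathcal{K}\subset D$ and $\overline{D}\subset\{1/2<\sigma<1\}$, and let $H(D)$ be the space of analytic functions on $D$ with the topology of uniform convergence on compact subsets, metrized by some $\rho$. Define the $H(D)^r$-valued map $\tau\mapsto(\zeta(s+i\gamma_1(\tau)),\dots,\zeta(s+i\gamma_r(\tau)))$ and let $P_T$ be the pushforward of normalized Lebesgue measure on $[T,2T]$ under it. Let $\underline{\omega}=(\omega_1,\dots,\omega_r)$ be independent points of the infinite torus $\Omega=\prod_p\{|z|=1\}$ with Haar measure, put $\zeta(s,\omega_j)=\prod_p(1-\omega_j(p)p^{-s})^{-1}$, and let $P$ be the law of $(\zeta(\cdot,\omega_1),\dots,\zeta(\cdot,\omega_r))$ on $H(D)^r$. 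The goal of the limit theorem is $P_T\Rightarrow P$ as $T\to\infty$.

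The conceptual heart is a joint equidistribution statement on $\Omega^r$. Consider the map sending $\tau$ to $\big((p^{-i\gamma_j(\tau)})_p\big)_{1\le j\le r}\in\Omega^r$; I claim the pushforward of normalized Lebesgue measure on $[T,2T]$ converges weakly to Haar measure. By the Weyl criterion this reduces to showing, for every nonzero finitely supported integer array $(k_{j,p})$, that
\[
\frac{1}{T}\int_T^{2T}\exp\Big(i\sum_{j=1}^r a_j\,\gamma_j(\tau)\Big)\,d\tau\longrightarrow 0,\qquad a_j:=\sum_p k_{j,p}\log p .
\]
Since $\{\log p\}_p$ is linearly independent over $\mathbb{Q}$, a nonzero array forces $(a_1,\dots,a_r)\neq\bar{0}$. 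Writing $\phi(\tau)=\sum_j a_j\gamma_j(\tau)$, admissibility applied with $(c_1,\dots,c_r)=(a_1,\dots,a_r)$ guarantees that $\phi'(\tau)=\sum_j a_j\gamma_j'(\tau)$ is monotonic and satisfies $|\phi'(\tau)|>m$ for all large $\tau$. The van der Corput first derivative test then bounds the integral by $O(1/m)$, so after division by $T$ it tends to $0$. This is exactly the step where admissibility is indispensable, and it is what replaces the method of \cite{An}.

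To upgrade torus equidistribution to the limit theorem on $H(D)^r$ I would first treat truncations $\zeta_n(s)=\sum_m m^{-s}v_n(m)$ with a smooth cutoff $v_n$: the map from $\Omega^r$ to $H(D)^r$ sending $\underline{\omega}$ to the associated truncated products is continuous, so the torus result plus the continuous mapping theorem yields the limit theorem for the truncated vector. It then remains to show that $\zeta_n$ approximates $\zeta$ in the mean along each curve,
\[
\lim_{n\to\infty}\limsup_{T\to\infty}\frac{1}{T}\int_T^{2T}\rho\big(\zeta(s+i\gamma_j(\tau)),\zeta_n(s+i\gamma_j(\tau))\big)\,d\tau=0 ,
\]
and the analogue for the random model. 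Here conditions (F1)--(F3) enter: the substitution $u=\gamma_j(\tau)$ together with $\gamma_j'\ge\alpha\gamma_j$ converts the average over $[T,2T]$ into a vertical integral weighted by $1/\gamma_j'(\gamma_j^{-1}(u))\le 1/(\alpha u)$, and a dyadic decomposition combined with the classical bound $\frac1X\int_0^X|\zeta(\sigma+iu)-\zeta_n(\sigma+iu)|^2\,du\to 0$ renders the factor $T^{-1}\log(\gamma_j(2T)/\gamma_j(T))=O(1)$ harmless. A standard approximation-in-probability argument (Billingsley) then assembles the full limit theorem $P_T\Rightarrow P$.

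Finally, because $\omega_1,\dots,\omega_r$ are independent, the support of $P$ is the product $S^r$, where $S\subset H(D)$ is the closure of the set of non-vanishing analytic functions (the classical support computation for a single random Euler product). Given non-vanishing $f_j$ continuous on $\mathcal{K}$ and analytic inside, each is, after uniform approximation justified by Mergelyan's theorem and the connectedness of the complement of $\mathcal{K}$, in the $\mathcal{K}$-restriction of $S$; hence the open set $G_\varepsilon=\{(h_1,\dots,h_r):\max_j\sup_{\mathcal{K}}|h_j-f_j|<\varepsilon\}$ meets $\supp P$ and satisfies $P(G_\varepsilon)>0$. The portmanteau theorem gives $\liminf_{T\to\infty}P_T(G_\varepsilon)\ge P(G_\varepsilon)>0$, which is the assertion of the theorem. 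I expect the main obstacle to be the mean-value transfer above: one must make the dyadic second-moment estimate uniform over the enormous range $[\gamma_j(T),\gamma_j(2T)]$ (of exponential length already for $\gamma_j(\tau)=e^{\tau}$), and it is precisely the growth lower bound $\gamma_j'\ge\alpha\gamma_j$ that keeps the resulting logarithmic factor proportional to $T$ and so cancels the $1/T$.
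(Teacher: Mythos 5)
Your overall architecture coincides with the paper's: torus equidistribution on $\Omega^r$ via the Weyl criterion and the first-derivative test (exactly where admissibility is used, as in Lemma~\ref{Prob2}), a smoothed truncation plus a mean-square transfer along the curves, identification of the support of the limit law as $S^r$, and Mergelyan plus the portmanteau theorem to finish. However, the step you yourself flag as the main obstacle --- the mean-value transfer --- is resolved incorrectly. You assert that the dyadic decomposition leaves a factor $T^{-1}\log\bigl(\gamma_j(2T)/\gamma_j(T)\bigr)=O(1)$, and that ``it is precisely the growth lower bound $\gamma_j'\ge\alpha\gamma_j$'' that guarantees this. That inequality gives $\log\gamma_j(2T)-\log\gamma_j(T)\ge\alpha T$, i.e.\ a \emph{lower} bound on the number of dyadic blocks; nothing in (F1)--(F3) or in admissibility bounds $\gamma_j$ from above. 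For instance $\gamma(\tau)=e^{\tau^2}$ satisfies all the hypotheses, yet $\log\bigl(\gamma(2T)/\gamma(T)\bigr)=3T^2$, so your estimate for the averaged distance is $O(\epsilon_n T)$ rather than $o(1)$, and the limit theorem is not established as written.

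The gap is repairable, and the paper's Lemma~\ref{mean value} shows one way: instead of bounding the weight $(\gamma^{-1})'(t)$ pointwise by $1/(\alpha t)$ and multiplying by the number of blocks, expand the square of the Dirichlet polynomial and compute the diagonal exactly, using $\int_{\gamma(T)}^{\gamma(cT)}(\gamma^{-1})'(t)\,dt=(c-1)T$ --- the total mass of the weight is of size $T$ no matter how fast $\gamma$ grows --- and reserve the bound $(\gamma^{-1})'(t)\le 1/(\alpha t)$, together with the monotonicity of $\gamma'$ and an integration by parts, for the off-diagonal terms, which are then $O(1)$ by the standard lemma from Titchmarsh. (Your dyadic scheme could also be saved by invoking the monotonicity of $(\gamma^{-1})'$ coming from (F2) to dominate $2^k\sup_{[2^k,2^{k+1}]}(\gamma^{-1})'$ by the integral of $(\gamma^{-1})'$ over an adjacent block, so that the blocks sum to $O(T)$ rather than to $O\bigl(\log\gamma(2T)\bigr)$ copies of a constant; but that is a different argument from the one you wrote, and you would still need a truncation whose length grows with the ordinate, as in the paper's use of the approximate functional equation with $x=t$ over the range $[\gamma(T/2),\gamma(5T/2)]$.) The remaining components of your proposal --- the equidistribution step, the support computation, and the final approximation argument --- match the paper and are correct.
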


\section{Preliminaries}

In this section, we introduce notations. 
First, for $\gamma_1, \dots \gamma_r \in \mathcal{F}$, we define 
\[
\underline{\zeta}(s + i\underline{\gamma}(\tau)) = (\zeta(s + i\gamma_1(\tau)), \dots, \zeta(s + i\gamma_r(\tau))).
\] 

We fix a compact subset $\mathcal{K}$ satisfying the assumptions of Theorem~\ref{main}. 
We put $\sigma_0 = (1/2 + \min_{s \in \mathcal{K}} \Re(s))/2$, $\sigma_1 = (\sigma_0 + \min_{s \in \mathcal{K}} \Re(s))/2$ and $\sigma_2 = (1 + \max_{s \in \mathcal{K}} \Re(s))/2$.
Then, we define the rectangle region $\mathcal{R}$ by 
\begin{equation} \label{definition of R}
\mathcal{R} = (\sigma_1,\ \sigma_2) \times i \left( \min_{s \in \mathcal{K}} \Im(s) - 1/2,\ \max_{s \in \mathcal{K}}\Im(s) + 1/2 \right).
\end{equation}

Then there exists a sequence of compact subsets $K_\ell$ of $\mathcal{R}$, $\ell=1, 2, \dots$ with the properties: 
\begin{itemize} 
 \item $\mathcal{R} = \bigcup_{\ell=1}^{\infty} K_\ell$, 
 \item $K_\ell \subset K_{\ell+1}$ for any $\ell \in \mathbb{N}$,
 \item For all compact subset $K$ of $\mathcal{R}$, there exists $\ell \in \mathbb{N}$ such that $K \subset K_\ell$ 
\end{itemize}
(See \cite[Chapter VII, 1.2 Proposition]{Co}). 
Let $\mathcal{H}(\mathcal{R})$ be the set of all holomorphic functions on $\mathcal{R}$. 
Now, for $g_1, g_2 \in \mathcal{H}(\mathcal{R})$, let $d_\ell(g_1, g_2) = \sup_{s \in K_\ell} |g_1(s) - g_2(s) |$ and put  
\[
d(g_1, g_2) = \sum_{\ell=1}^{\infty} \frac{1}{2^\ell} \frac{d_\ell(g_1, g_2)}{1 + d_\ell(g_1, g_2)}.
\]
Then $\mathcal{H}(\mathcal{R})$ is a metric space where $d$ is a metric on $\mathcal{H}(\mathcal{R})$,
and in particular, the metric space $\mathcal{H}(\mathcal{R})$ is a topological space with a topology of compact convergence. 
Furthermore we define $\underline{d}(\underline{g}_1, \underline{g}_2) = \max_{1 \le j \le r} d(g_{1j}, g_{2j})$
for $\underline{g}_k = (g_{k1}, \dots, g_{kr}) \in \mathcal{H}^r(\mathcal{R})$ for $k = 1, 2$. 
Then $\underline{d}$ is a metric on $\mathcal{H}^r(\mathcal{R})$. 

We write $\mathcal{B}(T)$ for the set of all Borel subsets of $T$ which is a topological space. 
Let $S^1 = \{ s \in \mathbb{C} : |s| = 1 \}$. 
For any prime $p$, we put $S_p = S^1$ and  $\Omega = \prod_p S_p$. 
Furthermore we define $\Omega^r = \Omega_1 \times \dots \times\Omega_r$, where $\Omega_j = \Omega$ for all $j = 1, \dots, r$. 
Then there exists the probability Haar measure $\mathbf{m}^r$ on $(\Omega^r, \mathcal{B}(\Omega^r))$. 
We note that $\mathbf{m}^r$ is written by $\mathbf{m}^r = \otimes_{j=i}^{r} \mathbf{m}_j$, where $\mathbf{m}_j = \otimes_{p} \mathbf{m}_p$ is the probability Haar measure on $(\Omega_j, \mathcal{B}(\Omega_j))$ and $\mathbf{m}_p$ is the probability Haar measure on $(S_p, \mathcal{B}(S_p))$. 
We define the mean $\mathbb{E}^{\mathbf{m}^r}[f] = \int_{\Omega^r} f \, d\mathbf{m}^r$.

Let $\omega_j(p)$ be the projection of $\omega_j \in \Omega_j$ to the coordinate space $S_p$ and let $\omega = (\omega_1, \dots, \omega_r)$. 
For $\omega_j \in \Omega$, we put $\omega_j(1) :=1$, 
\[
\omega_j(n) := \prod_p \omega_j(p)^{\nu(n ; p)}, 
\]
where $\nu(n ; p)$ is the exponent of the prime $p$ in the prime factorization of $n$.
Here, we define the $\mathcal{H}(\mathcal{R})$-valued random elements
\begin{align*}
\zeta(s, \omega_j) &= \prod_{p} \left(1 - \frac{\omega_j(p)}{p^s} \right)^{-1} \\
                 &= \sum_{n=1}^{\infty} \frac{\omega_j(n)}{n^s}
\end{align*}
and define $\mathcal{H}^r(\mathcal{R})$-valued random elements
\[
\underline{\zeta}(s, \omega) = (\zeta(s, \omega_1), \dots, \zeta(s, \omega_r)).
\]
We define probability measures on $(\mathcal{H}^r(\mathcal{R}), \mathcal{B}(\mathcal{H}^r(\mathcal{R})))$ by 
\begin{equation*}
\mathcal{Q}_T(A) = \frac{1}{T} \mathrm{meas} \left\{\tau \in [T, 2T] : \underline{\zeta}(s + i\underline{\gamma}(\tau)) \in A \right\}, 
\end{equation*}
\begin{equation*}
\mathcal{Q}(A) = \mathbf{m}^r \left\{\omega \in \Omega^r : \underline{\zeta}(s, \omega) \in A \right\}
\end{equation*} 
for $A \in \mathcal{B}(\mathcal{H}^r(\mathcal{R}))$. 

\section{A limit theorem}
In this section, we prove a limit theorem using Bagchi's method. 
However, we note that it differs from the original Bagchi' method. 
This section is based on \cite{Ko} and \cite{La96}. 

We use the next Lemma to show Lemma~\ref{mean value} and Lemma~\ref{Prob2}.

\begin{lemma} [cf. {\cite[Lemma~4.2]{Ti}}] \label{adm}
Let $F(x)$ be a real differentiable function such that $F'(x)$ is monotonic, and $F'(x) \ge m > 0$, or $F'(x) \le -m < 0$, throughout the interval $(a, b)$. Then 
\[
\left| \int_{a}^{b} e^{i F(x)}\, dx \right| \le \frac{4}{m}.
\]
\end{lemma}

First, we prove the following mean value theorem of Dirichlet polynomials. 
\begin{lemma} \label{mean value}
Let $\gamma \in \mathcal{F}$. 
Let $\{a_n\}_{n \ge 1}$ be complex numbers that satisfy $a_n \ll_{\delta} n^{\delta}$ for any $\delta > 0$.
Then, 
\[
\int_{T}^{cT} \left| \sum_{n \le \gamma(t)} \frac{a_n}{n^{\sigma + i\gamma(t)}} \right|^2 \, dt 
\ll_{c, \sigma, \varepsilon} T\left(\sum_{n \le \gamma(cT)}\frac{|a_n|^2}{n^{2\sigma}} + \gamma(T)^{1-2\sigma + 2\varepsilon}\log{\gamma(T)}\right)
\]
for any $T \ge T_0$, $c > 1$, $1/2 < \sigma < 1$ and $0 < \varepsilon < \sigma -1/2$, and uniformly for $1/2 < \sigma' \le \sigma \le \sigma'' < 1$.
\end{lemma}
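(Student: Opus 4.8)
The plan is to expand the square and isolate the diagonal. Writing $b_n=a_nn^{-\sigma}$ and using $\mathbbm{1}_{\{m\le t\}}\mathbbm{1}_{\{n\le t\}}=\mathbbm{1}_{\{\max(m,n)\le t\}}$, I would write the integral as
\[
\sum_{m,n\le\gamma(cT)}b_m\overline{b_n}\,J_{mn},\qquad J_{mn}=\int_{\max(\gamma(T),m,n)}^{\gamma(cT)}\Big(\frac{n}{m}\Big)^{it}(\gamma^{-1})'(t)\,dt.
\]
The diagonal $m=n$ has integrand $(\gamma^{-1})'(t)$, and since $\int_{\gamma(T)}^{\gamma(cT)}(\gamma^{-1})'(t)\,dt=\gamma^{-1}(\gamma(cT))-\gamma^{-1}(\gamma(T))=(c-1)T$ \emph{exactly}, it contributes at most $(c-1)T\sum_{n\le\gamma(cT)}|a_n|^2n^{-2\sigma}$, the desired main term. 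It is important to keep this exact value rather than the pointwise estimate $(\gamma^{-1})'(t)\le 1/(\alpha t)$ of Lemma~\ref{property F}(2), whose integral overshoots $(c-1)T$ by a factor $\asymp\log(\gamma(cT)/\gamma(T))/T$ that is unbounded for rapidly growing $\gamma$.

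For the off-diagonal sum I would exploit the rapid growth of $\gamma$ coming from (F3). Since $\gamma'\ge\alpha\gamma\to\infty$, $\gamma'$ is increasing, so by (F2) the weight $(\gamma^{-1})'$ is monotone; the second mean value theorem (equivalently, one integration by parts) together with Lemma~\ref{property F}(2) then gives
\[
|J_{mn}|\ll\frac{1}{|\log(n/m)|\,\max(\gamma(T),m,n)},\qquad m\ne n.
\]
The decisive point is that the variable summation length $n\le t$ forces the integral to start only at $t=\max(\gamma(T),m,n)$, where the decreasing weight is already of size $1/(\alpha\max(\gamma(T),m,n))$; this extra decay for large indices is exactly what the rapid growth of $\gamma$ provides and what makes the off-diagonal summable. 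Splitting the pairs according to whether $n/m$ is bounded away from $1$, the \emph{well-separated} pairs ($|\log(n/m)|\gg1$) are straightforward: a weighted Cauchy--Schwarz / Schur argument with test weight $k^{s}$, $-1/2<s<0$, gives $\sum\frac{|b_m||b_n|}{\max(\gamma(T),m,n)}\ll\sum_n|b_n|^2$, and here the hypotheses $\sigma>1/2$ and $a_n\ll_\delta n^{\delta}$ (with $\delta<\sigma-1/2$, the role of $\varepsilon$) make every resulting sum convergent, so no power of $\log\gamma(cT)$ survives.

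The main obstacle is the near-diagonal range $1<n/m<2$, where $1/|\log(n/m)|\asymp\max(m,n)/|n-m|$ is large and a term-by-term use of the bound above would cost a spurious factor $\sum_n|b_n|^2\log n$, i.e.\ a power of $\log\gamma(cT)$ that is fatal for fast $\gamma$. Recovering the cancellation here is the heart of the proof. I would integrate by parts once in $J_{mn}$ to expose the factorable kernel $(n/m)^{it}/\log(n/m)$ and apply the Montgomery--Vaughan form of Hilbert's inequality, $\big|\sum_{m\ne n}x_m\overline{x_n}/\log(n/m)\big|\ll\sum_n|x_n|^2 n$ with $x_k=b_kk^{-it}$. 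The term from the $(\gamma^{-1})''$ integral is handled by Fubini: at each fixed $t$ the inner sum runs over $m,n\le t$, so Hilbert's inequality bounds it by $\sum_{n\le t}|b_n|^2n\le t\sum_{n\le t}|b_n|^2$, and integrating against $t\,|(\gamma^{-1})''(t)|$, whose total mass is again controlled by $(c-1)T$ via the exact computation of the first paragraph, yields $\ll T\sum_n|b_n|^2$ with no logarithmic loss. The genuinely delicate contribution is the lower-endpoint term, whose cutoff $\max(\gamma(T),m,n)$ couples the two indices and enters the oscillation as soon as $\max(m,n)>\gamma(T)$; I would dispose of the pairs with $\max(m,n)\le\gamma(T)$ at once (constant cutoff, so Hilbert's inequality applies verbatim and the weight $(\gamma^{-1})'(\gamma(T))\le1/(\alpha\gamma(T))$ absorbs $\sum_{n\le\gamma(T)}|b_n|^2n\le\gamma(T)\sum|b_n|^2$), and the remaining large-index pairs, where the weight must be evaluated at the large index to stay summable, are precisely the point at which the full strength of the mean value theorem, matching the length of the Hilbert sum to the argument of the weight exactly as in the $(\gamma^{-1})''$-term, is required. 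Finally, uniformity in $\sigma\in[\sigma',\sigma'']$ is automatic, since all bounds depend on $\sigma$ only through sums that converge monotonically for $\sigma>1/2$.
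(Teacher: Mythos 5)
Your opening — expanding the square, evaluating the diagonal exactly, and integrating by parts to get $|J_{mn}|\ll\frac{1}{|\log(n/m)|}\bigl(\frac{1}{\gamma'(cT)}+(\gamma^{-1})'(\max\{\gamma(T),m,n\})\bigr)$ — is exactly the paper's. But you then diverge on a false premise. You claim that summing the near-diagonal pairs with absolute values costs a fatal factor of $\log\gamma(cT)$, and this drives you to Montgomery--Vaughan. It does not: the hypotheses give $|a_m\overline{a_n}|(mn)^{-\sigma}\ll_\varepsilon(mn)^{-\sigma+\varepsilon}$ with $\sigma-\varepsilon>1/2$ strictly, so the lemma of \cite[Chapter~VII, Section~7.2]{Ti} yields $\sum_{m<n\le N}(mn)^{-\sigma+\varepsilon}/\log(n/m)\ll N^{2-2\sigma+2\varepsilon}\log N$, and after multiplying by the weight $\ll 1/\max\{\gamma(T),m,n\}$ (splitting at $n\le\gamma(T)$ versus $n>\gamma(T)$, just as you do for the well-separated pairs) the \emph{entire} off-diagonal is $O_{\sigma,\varepsilon}(1)$: the power saving $N^{1-2\sigma+2\varepsilon}$ crushes the logarithm. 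This is the whole point of the assumption $0<\varepsilon<\sigma-1/2$ in the statement, and it is precisely the paper's argument; no bilinear cancellation is needed.

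The detour you take instead has a genuine hole. In your Hilbert-inequality treatment, the lower-endpoint boundary term for pairs with $\max(m,n)>\gamma(T)$ carries the phase $(n/m)^{i\max(m,n)}$ and the weight $(\gamma^{-1})'(\max(m,n))$, both depending jointly on $(m,n)$; the Montgomery--Vaughan inequality requires coefficients of product form $x_m\overline{x_n}$ and therefore does not apply to this sum. You acknowledge this is ``the genuinely delicate contribution'' but offer only the phrase ``the full strength of the mean value theorem'' in place of an argument — that is a missing step, not a proof. (Ironically, this term is finished at once by taking absolute values and using $(\gamma^{-1})'(n)\le 1/(\alpha n)$ from Lemma~\ref{property F}(2), i.e., by the very method you rejected as lossy.) So the proposal as written is incomplete; replacing the whole off-diagonal treatment by the absolute-value estimate via Titchmarsh's lemma both closes the gap and recovers the paper's proof.
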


\begin{proof}
First, we calculate the left hand side:
\begin{align*}
&\int_{T}^{cT} \left| \sum_{n \le \gamma(t)} \frac{a_n}{n^{\sigma + it}} \right|^2 \, dt \\
&= \int_{T}^{cT} \sum_{m \le \gamma(t)} \frac{a_m}{m^{\sigma + i\gamma(t)}} \sum_{n \le \gamma(t)} \frac{\overline{a_n}}{n^{\sigma - i\gamma(t)}} \, dt \\
&= \sum_{m, n \le \gamma(cT)} \frac{a_m \overline{a_n}}{(mn)^\sigma} \int_{T_{m, n}}^{cT} \left(\frac{n}{m} \right)^{i\gamma(t)} \, dt \\ 
&= cT\sum_{n \le \gamma(cT)} \frac{|a_n|^2}{n^{2\sigma}} - \sum_{n \le \gamma(cT)} \frac{|a_n|^2}{n^{2\sigma}}T_n \\ 
&\ \ + \sum_{\substack{m, n \le \gamma(cT) \\ m \ne n}}\frac{a_m \overline{a_n}}{(mn)^\sigma} \int_{T_{m, n}}^{\gamma(cT)} \left(\frac{n}{m} \right)^{i\gamma(t)} \, dt, 
\end{align*}
where $T_{m, n} = \gamma^{-1}(m)$ if $m > \gamma(T)$ and $m \ge n$, $T_{m, n} = \gamma^{-1}(n)$ if $n > \gamma(T)$ and $n \ge m$, and $T_{m, n} = T$ if $\gamma(T) \ge m, n$. 

We consider the second term. 
Since $\gamma(\tau)$ is increasing, $\gamma^{-1}(\tau)$ is also increasing. 
Therefore, we have 
\begin{align*}
\sum_{n \le \gamma(cT)} \frac{|a_n|^2}{n^{2\sigma}}T_n 
&= T\sum_{n \le \gamma(T)} \frac{|a_n|^2}{n^{2\sigma}} + \sum_{\gamma(T) < n \le \gamma(cT)}\frac{|a_n|^2}{n^{2\sigma}} \gamma^{-1}(n) \\
&\ll_{\varepsilon} T\sum_{n \le \gamma(T)} \frac{|a_n|^2}{n^{2\sigma}} + \sum_{\gamma(T) < n \le \gamma(cT)}\frac{|a_n|^2}{n^{2\sigma}}\gamma^{-1}(\gamma(cT)) \\
&\ll_{c} T\sum_{n \le \gamma(cT)} \frac{|a_n|^2}{n^{2\sigma}}.
\end{align*}

Next, we consider the third term. 
Using Lemma~\ref{adm}, for $m \ne n$, we have 
\begin{align*}
\int_{T_{m, n}}^{cT} \left(\frac{n}{m} \right)^{i\gamma(t)} \, dt 
&= \int_{T_{m, n}}^{cT} \exp\left(i \gamma(t) \log\frac{n}{m} \right)\, dt  \\
&\ll \frac{1}{\gamma'(T_{m. n}) \left|\log\frac{m}{n}\right|}. 
\end{align*}
Then, we obtain 
\begin{align*}
&\sum_{\substack{m, n \le \gamma(cT) \\ m \ne n}}\frac{a_m \overline{a_n}}{(mn)^\sigma} \int_{T_{m, n}}^{cT} \left(\frac{n}{m} \right)^{i\gamma(t)} \, dt \\
&\ll \sum_{m < n \le \gamma(cT)} \frac{1}{\gamma'(T_{n})(mn)^{\sigma -\varepsilon}\log\frac{m}{n}} \\
&= \frac{1}{\gamma'(T)}\sum_{m < n \le \gamma(T)} \frac{1}{(mn)^{\sigma -\varepsilon}\log\frac{m}{n}} \\
&\ \ \ + \sum_{\substack{m < n \le \gamma(cT) \\ \gamma(T) < n}}\frac{1}{\gamma'(\gamma^{-1}(n))(mn)^{\sigma -\varepsilon}\log{\frac{n}{m}}} \\
&:= \Sigma_1 + \Sigma_2. 
\end{align*}
From \cite[Chapter~VII, Section~7.2, Lemma]{Ti} and (F3) (a), we have  
\begin{align*}
\Sigma_1
\ll_{\sigma, \varepsilon} \frac{1}{\gamma'(T)}\gamma(T)^{2 -2\sigma + 2\varepsilon} \log{\gamma(T)} \ll T\gamma(T)^{1 -2\sigma + 2\varepsilon} \log{\gamma(T)} .
\end{align*}
Finally, we consider $\Sigma_2$. 
We note that $\gamma'(\gamma^{-1}(\tau)) = 1/(\gamma^{-1})'(\tau)$ and $(\gamma^{-1})'(\tau) \ll \gamma^{-1}(\tau)/\tau$ by the inverse function theorem and (F3).  
Since $\gamma^{-1}(\tau)$ is an increasing function, we have 
\begin{align*}
  \Sigma_2 \ll_{c} T\sum_{\substack{m < n \le \gamma(cT) \\ \gamma(T) < n}}\frac{1}{m^{\sigma -\varepsilon}n^{\sigma + 1 -\varepsilon}\log{\frac{n}{m}}}. 
\end{align*}
Using the same way as the estimate of $\Sigma_1$, we conclude 
\[
\Sigma_2 \ll_{\sigma, \varepsilon} T \gamma(T)^{1-2\sigma + 2\varepsilon} \log{\gamma(T)}.
\]
\end{proof}

Now we fix a real-valued smooth function $\varphi(x)$ on $[0, \infty)$ with compact support satisfying $\varphi(x) = 1$ for $0 \le x \le 1$ and $0 \le \varphi(x) \le 1$ for $x \ge 0$. 

We put 
\[
\zeta_X(s) = \sum_{n=1}^{\infty}\frac{\varphi(n/X)}{n^{s}},\ \zeta_X(s, \omega_j) = \sum_{n=1}^{\infty}\frac{\omega_j(n) \varphi(n/X)}{n^{s}}
\]
and
\[
\underline{\zeta}_X(s + i\underline{\gamma}(\tau)) = (\zeta_X(s + i\gamma_1(\tau)), \dots, \zeta_X(s + i\gamma_r(\tau))),
\]
\[
\underline{\zeta}_X(s, \omega) = (\zeta_X(s, \omega_1), \dots, \zeta_X(s, \omega_r))
\]
for $X \ge 2$ and $\omega \in \Omega^r$. 

\begin{lemma} [c.f. {\cite[Theorem~4.11]{Ti}}] \label{approximation}
We have 
\[
\zeta(s) = \sum_{n \le x} \frac{1}{n^{\sigma + it}} + \frac{x^{1-s}}{s-1} + O(x^{-\sigma})
\]
uniformly for $\sigma \ge \sigma' > 0$, $|t| < 2\pi x/C$, where $C$ is a given constant greater than 1.
\end{lemma}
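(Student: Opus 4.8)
This is the classical Euler--Maclaurin approximation recorded as \cite[Theorem~4.11]{Ti}; the plan is to reprove it by summation by parts in the region of absolute convergence and then continue analytically. First I would take $\sigma > 1$, so that $\zeta(s) - \sum_{n \le x} n^{-s} = \sum_{n > x} n^{-s}$, and apply the Euler--Maclaurin formula to $f(u) = u^{-s}$ on $[x, \infty)$. Writing $\psi(u) = \{u\} - 1/2$ and letting the upper limit tend to infinity (using $\sigma > 1$ to kill the contribution at $\infty$), this gives
\[
\sum_{n > x} n^{-s} = \frac{x^{1-s}}{s-1} + \psi(x)\,x^{-s} - s\int_x^\infty \psi(u)\,u^{-s-1}\,du,
\]
where $\int_x^\infty u^{-s}\,du = x^{1-s}/(s-1)$ supplies the desired main term and the boundary term obeys $\psi(x)\,x^{-s} = O(x^{-\sigma})$.

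The heart of the matter, and the step I expect to be most delicate, is the remaining integral $-s\int_x^\infty \psi(u)\,u^{-s-1}\,du$. The crude bound $|s|\int_x^\infty u^{-\sigma-1}\,du \ll |s|\,x^{-\sigma}$ is worthless once $|t|$ is of the same order as $x$, so it does not produce the uniform error $O(x^{-\sigma})$ over the whole range $|t| < 2\pi x/C$. To recover cancellation I would expand the sawtooth in its Fourier series $\psi(u) = -\sum_{k \ge 1} \frac{\sin(2\pi k u)}{\pi k}$ and treat each oscillatory integral $\int_x^\infty u^{-\sigma-1}e^{i(2\pi k u - t\log u)}\,du$ by integration by parts. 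Its phase $2\pi k u - t\log u$ has derivative $2\pi k - t/u$, and on $u \ge x$ the hypothesis $|t| < 2\pi x/C$ with $C > 1$ forces $|2\pi k - t/u| \ge 2\pi(k - 1/C)$, which is bounded away from zero uniformly in $k \ge 1$. The absence of a stationary point yields $\int_x^\infty \psi(u)\,u^{-s-1}\,du \ll_C x^{-\sigma-1}$ after summation over $k$; multiplying by $s$ and using $|s| \le \sigma + |t| \ll x$ on the admissible range then collapses this contribution to $O(x^{-\sigma})$. This is precisely where the condition $|t| < 2\pi x/C$ is essential.

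Finally I would pass from $\sigma > 1$ to the full range $\sigma \ge \sigma' > 0$ by analytic continuation. With $x$ fixed, the finite sum and $x^{1-s}/(s-1)$ are meromorphic on $\{\sigma > 0\}$ with a single pole at $s = 1$ of residue $1$, matching the pole of $\zeta(s)$, while the error integral $\int_x^\infty \psi(u)\,u^{-s-1}\,du$ converges and is holomorphic for $\sigma > 0$. Hence the identity, valid on $\sigma > 1$, persists throughout $\sigma > \sigma'$; since the oscillatory estimate above is itself valid for all $\sigma > 0$, the error term remains $O(x^{-\sigma})$ uniformly, as claimed.
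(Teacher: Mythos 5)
The paper gives no proof of this lemma at all --- it is quoted verbatim as Theorem~4.11 of Titchmarsh --- so your argument can only be compared with the classical one there. Your route is correct: partial summation against $\psi(u)=\{u\}-1/2$ for $\sigma>1$ gives the exact identity $\zeta(s)=\sum_{n\le x}n^{-s}+\frac{x^{1-s}}{s-1}+\psi(x)x^{-s}-s\int_x^\infty\psi(u)u^{-s-1}\,du$, analytic continuation extends it to $\sigma>0$ (the pole of $\frac{x^{1-s}}{s-1}$ matching that of $\zeta$), and all the work sits in the tail integral. Your treatment of it --- expanding $\psi$ in its Fourier series and applying the first-derivative test to each phase $\pm 2\pi ku-t\log u$, whose derivative is monotonic and stays at distance at least $2\pi(k-1/C)$ from zero precisely because $|t|<2\pi x/C$ --- is sound and is exactly where the hypothesis enters: the $1/k$ from the Fourier coefficient together with the $1/k$ from the derivative bound makes the $k$-sum converge to $O_C(x^{-\sigma-1})$, and $|s|\ll x$ absorbs the last factor. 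Titchmarsh argues differently: he truncates at a large integer $N$, bounds $-s\int_N^\infty$ trivially by taking $N$ huge, and compares $\sum_{x<n\le N}n^{-s}$ with $\int_x^N u^{-s}\,du$ via the van der Corput sum-versus-integral lemma (with $f(u)=-\tfrac{t}{2\pi}\log u$, $|f'|<1/C$). Since that lemma is itself proved with the sawtooth expansion, the two arguments are close cousins, but yours is more self-contained, while his avoids the factor $|s|$ entirely --- which matters only for the uniformity in large $\sigma$, where your bound $|s|\,x^{-\sigma-1}\ll x^{-\sigma}$ requires $\sigma\le x$ (harmless, as the statement is elementary for $\sigma>x$). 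Two small points you should make explicit: the interchange of the $k$-sum with the integral (justified by the uniform boundedness of the partial sums of the sawtooth series and dominated convergence), and the fact that $\sin(2\pi ku)$ contributes two exponential phases, both of which satisfy the same lower bound on the phase derivative.
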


\begin{lemma} \label{Prob1}
For all $\gamma_1, \dots, \gamma_r \in \mathcal{F}$, 
\[
\lim_{X \to \infty} \limsup_{T \to \infty} \frac{1}{T}\int_{T}^{2T} \underline{d}(\underline{\zeta}(s + i\underline{\gamma}(\tau)), \underline{\zeta}_X(s +i\underline{\gamma}(\tau)))\, d\tau = 0.
\]
\end{lemma}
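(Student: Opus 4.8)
The plan is to prove this approximation lemma by reducing the metric convergence to a mean-square estimate on each compact piece $K_\ell$, then invoke the mean value theorem (Lemma~\ref{mean value}) on Dirichlet polynomials. Since $\underline{d}(\underline{g}_1,\underline{g}_2) = \max_{1\le j\le r} d(g_{1j},g_{2j})$ and each $d$ is bounded by $1$, it suffices to bound
\[
\frac{1}{T}\int_T^{2T} d\bigl(\zeta(s+i\gamma_j(\tau)),\ \zeta_X(s+i\gamma_j(\tau))\bigr)\, d\tau
\]
for each fixed $j$, and then take the maximum over the finitely many $j$. By the definition of $d$ as a weighted sum over the $K_\ell$ with geometric weights $2^{-\ell}$, and since each summand is at most $1$, I would fix a large $L$, handle the tail $\ell > L$ by the trivial bound $\sum_{\ell>L}2^{-\ell} = 2^{-L}$ (which is small uniformly), and reduce to controlling the finitely many terms $\ell \le L$. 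For each such $\ell$ it is enough to show
\[
\lim_{X\to\infty}\limsup_{T\to\infty}\frac{1}{T}\int_T^{2T}\sup_{s\in K_\ell}\bigl|\zeta(s+i\gamma_j(\tau)) - \zeta_X(s+i\gamma_j(\tau))\bigr|\, d\tau = 0.
\]

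\emph{Next I would} pass from the supremum over $K_\ell$ to an $L^2$-average in $t$ using a Cauchy-type or contour argument. Because $K_\ell$ is a compact subset of the open rectangle $\mathcal{R}$, I can enclose it in a slightly larger rectangle whose closure still lies in $\mathcal{R}$, and bound the sup-norm of the holomorphic function $\zeta(s+i\gamma_j(\tau)) - \zeta_X(s+i\gamma_j(\tau))$ by a line integral of its modulus over a fixed horizontal contour at some $\sigma$ with $1/2 < \sigma_1 < \sigma$, via Cauchy's integral formula. After interchanging the $\tau$-integration with the contour integral (justified by positivity/Fubini), the problem becomes estimating
\[
\frac{1}{T}\int_T^{2T}\bigl|\zeta(\sigma + i(u+\gamma_j(\tau))) - \zeta_X(\sigma + i(u+\gamma_j(\tau)))\bigr|\, d\tau
\]
for $u$ ranging over a bounded vertical segment, and then applying Cauchy--Schwarz to replace the first power by a mean square.

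\emph{The crucial step} is the change of variables $t = u + \gamma_j(\tau)$, which converts the $\tau$-integral into an integral against the weight $(\gamma_j^{-1})'(t)$: since $\gamma_j$ is a strictly increasing bijection (F1) of class $C^2$ (F2), writing $\tau = \gamma_j^{-1}(t-u)$ gives $d\tau = (\gamma_j^{-1})'(t-u)\,dt$, and the range $\tau\in[T,2T]$ becomes $t \in [u+\gamma_j(T),\, u+\gamma_j(2T)]$. This is exactly the shape to which Lemma~\ref{mean value} applies (with $c=2$ and a harmless shift by the bounded quantity $u$). I would use Lemma~\ref{approximation} to write $\zeta$ near the critical strip as a finite Dirichlet sum plus a small tail, so that $\zeta - \zeta_X$ becomes (up to controlled error) a Dirichlet polynomial with coefficients of the form $1 - \varphi(n/X)$, which vanish for $n \le X$ and are bounded by $1$; the relevant mean square is then
\[
\frac{1}{T}\cdot T\sum_{n \le \gamma_j(2T)}\frac{|1-\varphi(n/X)|^2}{n^{2\sigma}} \ll \sum_{n > X}\frac{1}{n^{2\sigma}},
\]
which tends to $0$ as $X\to\infty$ uniformly in $T$ because $2\sigma > 1$. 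The validity of Lemma~\ref{approximation} requires $|t| < 2\pi x/C$, i.e.\ the truncation length $x$ must exceed a constant multiple of the (very large) ordinate $\gamma_j(\tau)$; since $\gamma_j$ grows at least exponentially by Lemma~\ref{property F}(1), one must take $x$ comparable to $\gamma_j(2T)$, but then the weighted mean value theorem with the decaying weight $(\gamma_j^{-1})'(t) \le 1/(\alpha t)$ (Lemma~\ref{property F}(2)) is precisely what absorbs this large truncation length and keeps the final bound of size $T$.

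\emph{The main obstacle} I anticipate is the bookkeeping in this change of variables and the interplay between the truncation parameter $x$ in Lemma~\ref{approximation}, the length $\gamma_j(2T)$ of the effective Dirichlet sum, and the smoothing parameter $X$: one must ensure that the tail error $O(x^{-\sigma})$ from the approximate functional equation, after being integrated against the weight and averaged, remains negligible uniformly in $T$, and that the shift by the bounded segment parameter $u$ does not spoil the hypotheses of Lemma~\ref{mean value}. Once the weighted mean value theorem is in place, the convergence $\sum_{n>X} n^{-2\sigma}\to 0$ supplies the decay in $X$, and taking $\limsup_{T\to\infty}$ first and then $X\to\infty$ yields the claim.
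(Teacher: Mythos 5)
Your proposal follows essentially the same route as the paper: reduce via the definition of $\underline{d}$ to a sup over compact subsets of $\mathcal{R}$ for each fixed $j$, pass to a contour integral by Cauchy's formula, change variables so the $\tau$-average becomes an integral weighted by $(\gamma_j^{-1})'(t)$, apply the approximate functional equation (Lemma~\ref{approximation}, taken in the paper with $x=t$ so that the hypothesis $|t|<2\pi x/C$ is automatic), and finish with Cauchy--Schwarz and the weighted mean value estimate of Lemma~\ref{mean value}, yielding the bound $\bigl(\sum_{n>X} n^{-2\sigma_0}\bigr)^{1/2}\to 0$. The plan is correct and matches the paper's argument in all essential steps.
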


\begin{proof}
From the definition of the metric $\underline{d}$, it suffices to show that

\[
\lim_{X \to \infty} \limsup_{T \to \infty} \frac{1}{T}\int_{T}^{2T} \sup_{s \in C} |\zeta(s + i\gamma(\tau)) - \zeta_X(s +i\gamma(\tau))|\, d\tau = 0
\]
for all compact set $C \subset \mathcal{R}$ and any $\gamma \in \mathcal{F}$. 

Let $T$ be sufficiently large. 
Applying Cauchy's integral formula, we have 
\[
\zeta(s + i\gamma(\tau)) - \zeta_X(s +i\gamma(\tau)) 
= \frac{1}{2\pi i} \int_{\partial{\mathcal{R}}} \frac{\zeta(z + i\gamma(\tau)) - \zeta_X(z +i\gamma(\tau))}{z - s}\, dz
\]
for $s \in C$.
Using this equation and the fact that $\gamma(cT) - \gamma(T) \to \infty$ as $T \to \infty$ for $c > 1$ and $\gamma \in \mathcal{F}$ (this fact can be shown by the mean value theorem), we obtain the following inequalities: 
\begin{align*}
&\frac{1}{T} \int_{T}^{2T} \sup_{s \in C} |\zeta(s + i\gamma(\tau)) - \zeta_X(s +i\gamma(\tau))|\, d\tau \\
&\le \frac{1}{2 \pi T}  \int_{T}^{2T} \sup_{s \in C} \int_{\partial{\mathcal{R}}} \frac{|\zeta(z + i\gamma(\tau)) - \zeta_X(z +i\gamma(\tau))|}{|z - s|}\, |dz| d\tau \\
&\ll_C \frac{1}{T} \int_{\partial{\mathcal{R}}}\, |dz| \int_{T}^{2T} |\zeta(z + i\gamma(\tau)) - \zeta_X(z +i\gamma(\tau))|\, d\tau \\
&\ll_{\mathcal{R}}  \frac{1}{T} \sup_{\sigma: s \in \partial{\mathcal{R}}} \int_{\frac{1}{2}T}^{\frac{5}{2}T} |\zeta(\sigma + i\gamma(t)) - \zeta_X(\sigma +i\gamma(t))|\, dt.
\end{align*}
Applying Lemma~\ref{approximation} with $x = \gamma(t)$, we have 
\begin{align*}
&\int_{\frac{1}{2}T}^{\frac{5}{2}T} |\zeta(\sigma + i\gamma(t)) - \zeta_X(\sigma +i\gamma(t))|\, dt \\
&\le \int_{\frac{1}{2}T}^{\frac{5}{2}T} \left|\sum_{n \le \gamma(t)}\frac{1}{n^{\sigma + i\gamma(t)}} - \zeta_X(\sigma +i\gamma(t))\right|\, dt \\ 
&\ \ \ + O\left( \int_{\frac{1}{2}T}^{\frac{5}{2}T}\gamma(t)^{-\sigma} \, dt\right) \\
&= \int_{\frac{1}{2}T}^{\frac{5}{2}T} \left|\sum_{X < n \le \gamma(t)}\frac{1 - \varphi(n/X)}{n^{\sigma + i\gamma(t)}}\right|\, dt + O(T\gamma(T/2)^{-\frac{1}{2}}).
\end{align*}
By the Cauchy--Schwarz inequality and Lemma~\ref{mean value}, we have 
\begin{align*}
&\int_{\frac{1}{2}T}^{\frac{5}{2}T} \left|\sum_{X < n \le \gamma(t)}\frac{1 - \varphi(n/X)}{n^{\sigma + i\gamma(t)}}\right|\, dt \\
&\le \left(\int_{\frac{1}{2}T}^{\frac{5}{2}T} 1 \, dt \right)^{\frac{1}{2}} \left( \int_{\frac{1}{2}T}^{\frac{5}{2}T} \left|\sum_{X < n \le \gamma(t)}\frac{1 - \varphi(n/X)}{n^{\sigma + i\gamma(t)}}\right|^2 \, dt\right)^{\frac{1}{2}} \\
&\ll_{\sigma_0, \varepsilon} T \left(\sum_{X < n} \frac{1}{n^{2\sigma_0}} 
 + \gamma(T)^{1- 2\sigma_0 + 2\varepsilon} \log{\gamma(T)} \right)^{\frac{1}{2}}.
\end{align*}
Since $\sigma_0 > 1/2$, we conclude 
\[
\limsup_{T \to \infty} \frac{1}{T}\int_{T}^{2T} \sup_{s \in C} |\zeta(s + i\gamma(\tau)) - \zeta_X(s +i\gamma(\tau))|\, d\tau \\
\ll \left(\sum_{X < n} \frac{1}{n^{2\sigma_0}}\right)^{\frac{1}{2}} \to 0
\]
as $X \to \infty$. 
\end{proof} 

Let $\mathcal{P}_0$ be a finite set of prime numbers. 
Let $\Omega_{\mathcal{P}_0} = \prod_{p \in \mathcal{P}_0} S_p$ 
and $\Omega^r_{\mathcal{P}_0} = \Omega_{\mathcal{P}_0 1} \times \dots \times \Omega_{\mathcal{P}_0 r}$, 
where $\Omega_{\mathcal{P}_0 j} = \Omega_{\mathcal{P}_0}$ for all $j = 1, \dots, r$.

We define the probability measure on 
$(\Omega^r_{\mathcal{P}_0}, \mathcal{B}(\Omega^r_{\mathcal{P}_0}))$ by 
\[
Q^{\mathcal{P}_0}_T(A) = \frac{1}{T} \mathrm{meas} \left\{\tau \in [T, 2T] : ((p^{-i\gamma_1(\tau)})_{p \in \mathcal{P}_0} , \dots, (p^{-i\gamma_r(\tau)})_{p \in \mathcal{P}_0}) \in A \right\}, 
\]
for $A \in \mathcal{B}(\Omega^r_{\mathcal{P}_0})$. 
Let $\mathbf{m}^r_{\mathcal{P}_0}$ be the probability Haar measure on $(\Omega^r_{\mathcal{P}_0}, \mathcal{B}(\Omega^r_{\mathcal{P}_0}))$. 
Then, the next lemma holds. 

\begin{lemma} \label{Prob2}
Let $(\gamma_1, \dots, \gamma_r)$ be admissible. 
Then the probability measure $Q^{\mathcal{P}_0}_T$ converges weakly to $\mathbf{m}^r_{\mathcal{P}_0}$ as $T \to \infty$.  
\end{lemma}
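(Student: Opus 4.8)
The plan is to prove weak convergence by the standard Fourier-analytic (Weyl) criterion on the compact abelian group $\Omega^r_{\mathcal{P}_0}$, which is a finite-dimensional torus. Since $\Omega^r_{\mathcal{P}_0} = \prod_{j=1}^r \prod_{p \in \mathcal{P}_0} S_p$, its continuous characters are indexed by integer tuples $\underline{k} = (k_{j,p})_{1 \le j \le r,\, p \in \mathcal{P}_0}$, the character $\chi_{\underline{k}}$ sending a point $((z_{j,p})_p)_j$ to $\prod_{j,p} z_{j,p}^{k_{j,p}}$. Because finite linear combinations of characters are dense in $C(\Omega^r_{\mathcal{P}_0})$ by the Stone--Weierstrass theorem, and because all $Q^{\mathcal{P}_0}_T$ are probability measures, it suffices to show that $\int \chi_{\underline{k}}\, dQ^{\mathcal{P}_0}_T \to \int \chi_{\underline{k}}\, d\m^r_{\mathcal{P}_0}$ for every $\underline{k}$; this is the continuity theorem for characteristic functions on a compact group.

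First I would dispose of the trivial character $\underline{k} = \underline{0}$, for which both sides equal $1$. For a nontrivial $\underline{k}$, orthogonality of characters gives $\int \chi_{\underline{k}}\, d\m^r_{\mathcal{P}_0} = 0$, so the goal becomes to show that $\int \chi_{\underline{k}}\, dQ^{\mathcal{P}_0}_T \to 0$. By the definition of $Q^{\mathcal{P}_0}_T$ and evaluating $\chi_{\underline{k}}$ at the point $((p^{-i\gamma_j(\tau)})_p)_j$, this integral equals
\[
\frac{1}{T} \int_T^{2T} \exp\!\left( -i \sum_{j=1}^r b_j\, \gamma_j(\tau) \right) d\tau,
\qquad b_j := \sum_{p \in \mathcal{P}_0} k_{j,p} \log p.
\]

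Next I would observe that the vector $(b_1, \dots, b_r)$ is nonzero. Indeed, by unique factorization the numbers $\{\log p : p \in \mathcal{P}_0\}$ are linearly independent over $\mathbb{Q}$, so $b_j = 0$ forces $k_{j,p} = 0$ for all $p \in \mathcal{P}_0$; since $\underline{k} \ne \underline{0}$, at least one $b_j \ne 0$, i.e. $(b_1, \dots, b_r) \in \mathbb{R}^r \setminus \{\bar{0}\}$. This is precisely the setting of the admissibility hypothesis applied to $(c_1, \dots, c_r) = (b_1, \dots, b_r)$: there exist $m > 0$ and $T^\ast > 0$ such that, setting $F(\tau) = -\sum_{j=1}^r b_j \gamma_j(\tau)$, the derivative $F'(\tau) = -\sum_{j=1}^r b_j \gamma_j'(\tau)$ is monotonic and satisfies $F'(\tau) > m$ or $F'(\tau) < -m$ for all $\tau > T^\ast$. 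For $T > T^\ast$, Lemma~\ref{adm} then yields $\left| \int_T^{2T} e^{iF(\tau)}\, d\tau \right| \le 4/m$, whence
\[
\left| \int_{\Omega^r_{\mathcal{P}_0}} \chi_{\underline{k}}\, dQ^{\mathcal{P}_0}_T \right| \le \frac{4}{mT} \longrightarrow 0
\qquad (T \to \infty).
\]

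Combining the two cases, $\int \chi_{\underline{k}}\, dQ^{\mathcal{P}_0}_T \to \int \chi_{\underline{k}}\, d\m^r_{\mathcal{P}_0}$ for every character, which by density of the trigonometric polynomials gives the asserted weak convergence. The only genuinely nontrivial input is the oscillatory-integral bound, and this is exactly what the admissibility condition was designed to supply; the main point to get right is the identification of a nontrivial character with a nonzero coefficient vector $(b_1, \dots, b_r)$ via linear independence of $\log p$, so that Lemma~\ref{adm} applies. Everything else is the routine character computation and the standard passage from convergence on characters to weak convergence.
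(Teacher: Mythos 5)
Your proposal is correct and follows essentially the same route as the paper: both compute the Fourier transform of $Q^{\mathcal{P}_0}_T$ at each character, handle the trivial character separately, and for a nontrivial character use unique factorization to get a nonzero coefficient vector $(b_1,\dots,b_r)$ so that admissibility together with Lemma~\ref{adm} gives the $O(1/T)$ bound. The only difference is cosmetic—you spell out the Stone--Weierstrass passage from convergence on characters to weak convergence, which the paper leaves implicit in ``This implies the conclusion.''
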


\begin{proof}
Let $T$ be sufficiently large. 
We consider the Fourier transformation $g_T(\underline{\mathbf{n}}_1, \dots, \underline{\mathbf{n}}_r)$ of $Q^{\mathcal{P}_0}_T$ for 
$(\underline{\mathbf{n}}_1, \dots, \underline{\mathbf{n}}_r) = ((n_{1 p})_{p \in \mathcal{P}_0}, \dots (n_{r p})_{p \in \mathcal{P}_0}) \in (\mathbb{Z}^{\#{\mathcal{P}_0}})^r$. 
It is well known that 
\[
g_T(\underline{\mathbf{n}}_1, \dots, \underline{\mathbf{n}}_r) := \int_{\Omega^r_{\mathcal{P}_0}} \prod_{j=1}^{r} \prod_{p \in \mathcal{P}_0} x^{n_{j p}}_p \,dQ^{\mathcal{P}_0}_T = \frac{1}{T}\int_{T}^{2T} \prod_{j=1}^{r} \prod_{p \in \mathcal{P}_0} p^{-i n_{j p} \gamma_j(\tau)} \, d\tau.
\]
If $(\underline{\mathbf{n}}_1, \dots, \underline{\mathbf{n}}_r) = (\bar{0}, \dots, \bar{0})$, it is clear that $g_T(\underline{\mathbf{n}}_1, \dots, \underline{\mathbf{n}}_r) = 1$. 
If $(\underline{\mathbf{n}}_1, \dots, \underline{\mathbf{n}}_r) \ne (\bar{0}, \dots, \bar{0})$, then there exists $j = 1, \dots, r$ such that $\sum_{p \in \mathcal{P}_0}n_{j p} \log{p} \ne 0$ since $\{\log{p} : p \in \mathcal{P}_0 \}$ is linearly independent over the field of rational numbers. 
We put $c_j = \sum_{p \in \mathcal{P}_0}n_{j p} \log{p}$ for any $j = 1, \dots, r$. 
Since $(c_1, \dots, c_r) \ne \bar{0}$, we have 
\[
\frac{1}{T}\int_{T}^{2T} \prod_{j=1}^{r} \prod_{p \in \mathcal{P}_0} p^{- i n_{j p} \gamma_j(\tau)} \, d\tau 
\ll_{c_1, \dots, c_r} \frac{1}{T}
\]
by Lemma~\ref{adm} and the definition of admissibility. 
Then 
\begin{align*}
\lim_{T \to \infty} g_T(\underline{\mathbf{n}}_1, \dots, \underline{\mathbf{n}}_r) = 
\begin{cases}
1 & \mbox{if $(\underline{\mathbf{n}}_1, \dots, \underline{\mathbf{n}}_r) = (\bar{0}, \dots, \bar{0})$}, \\
0 & \mbox{if $(\underline{\mathbf{n}}_1, \dots, \underline{\mathbf{n}}_r) \ne(\bar{0}, \dots, \bar{0})$}.
\end{cases}
\end{align*}
This implies the conclusion (cf. \cite[Theorem~3.21]{La96}).

\end{proof}

\begin{lemma} \label{Prob3}
We have 
\[
\lim_{X \to \infty} \mathbb{E}^{\mathbf{m}^r} [\underline{d}(\underline{\zeta}(s, \omega), \underline{\zeta}_X(s, \omega))] = 0. 
\]
\end{lemma}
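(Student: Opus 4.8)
The plan is to reduce the $r$-fold statement to a single zeta-function, establish a pointwise-in-$s$ mean-square bound from the orthogonality of the random coefficients, and then upgrade that bound to control of the expected supremum on each $K_\ell$ using the mean value property of holomorphic functions. First I would use $\underline{d} = \max_{1\le j\le r} d \le \sum_{j=1}^r d$ together with the fact that $d(\zeta(s,\omega_j),\zeta_X(s,\omega_j))$ depends only on the coordinate $\omega_j$. Since $\m^r = \bigotimes_{j=1}^r \m_j$, integrating over the product measure collapses each summand to an integral over $\m_j$, giving
\[
\mathbb{E}^{\m^r}[\underline{d}(\underline{\zeta}(s,\omega),\underline{\zeta}_X(s,\omega))] \le \sum_{j=1}^r \mathbb{E}^{\m_j}[d(\zeta(s,\omega_j),\zeta_X(s,\omega_j))],
\]
so it suffices to prove $\mathbb{E}^{\m_j}[d(\zeta(s,\omega_j),\zeta_X(s,\omega_j))] \to 0$ as $X\to\infty$, a single statement independent of $j$ by symmetry of the Haar measures.

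Next I would exploit the orthogonality relation $\mathbb{E}^{\m_j}[\omega_j(m)\overline{\omega_j(n)}]=\delta_{mn}$, which follows from the independence and rotation-invariance of the coordinates $\omega_j(p)$ on $S^1$. For a fixed $s$ with $\Re(s)=\sigma$ this gives, since $\varphi(n/X)=1$ for $n\le X$ and $0\le\varphi\le 1$,
\[
\mathbb{E}^{\m_j}\big[|\zeta(s,\omega_j)-\zeta_X(s,\omega_j)|^2\big] = \sum_{n=1}^\infty \frac{(1-\varphi(n/X))^2}{n^{2\sigma}} \le \sum_{n>X}\frac{1}{n^{2\sigma}}.
\]
Because every point of $\mathcal{R}$ has real part at least $\sigma_1>1/2$, this bound is uniform on $\mathcal{R}$ and tends to $0$ as $X\to\infty$.

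Then I would pass from mean-square to expected supremum on a fixed compact $K_\ell$. Choosing a compact $\hat K_\ell$ with $K_\ell\subset\hat K_\ell\subset\mathcal{R}$, subharmonicity of $|g|^2$ for the holomorphic difference $g(\cdot)=\zeta(\cdot,\omega_j)-\zeta_X(\cdot,\omega_j)$ (equivalently, Cauchy's formula on discs of fixed radius $\rho=\dist(K_\ell,\partial\mathcal{R})$) yields a constant $C_\ell$ with $\sup_{s\in K_\ell}|g(s)| \le C_\ell\big(\int_{\hat K_\ell}|g(s)|^2\,dA(s)\big)^{1/2}$, where $dA$ is planar Lebesgue measure. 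Taking $\mathbb{E}^{\m_j}$, applying Jensen's inequality to the square root, moving the expectation inside the area integral by Fubini, and inserting the previous mean-square bound, I obtain
\[
\mathbb{E}^{\m_j}[d_\ell(\zeta(s,\omega_j),\zeta_X(s,\omega_j))] \le C_\ell\,|\hat K_\ell|^{1/2}\Big(\sum_{n>X}\frac{1}{n^{2\sigma_1}}\Big)^{1/2} \xrightarrow{X\to\infty} 0
\]
for every $\ell$.

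Finally I would combine these with the definition of $d$. Using $\tfrac{x}{1+x}\le\min(x,1)$, for any $L$ one has $d(\zeta(s,\omega_j),\zeta_X(s,\omega_j)) \le \sum_{\ell=1}^{L}2^{-\ell}d_\ell(\zeta(s,\omega_j),\zeta_X(s,\omega_j)) + 2^{-L}$, hence $\mathbb{E}^{\m_j}[d] \le \sum_{\ell=1}^{L}2^{-\ell}\mathbb{E}^{\m_j}[d_\ell] + 2^{-L}$; given $\varepsilon>0$ I fix $L$ with $2^{-L}<\varepsilon/2$ and then send $X\to\infty$ to make the finite sum below $\varepsilon/2$. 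The hard part will be the third step: producing the uniform constant $C_\ell$ from the mean value/Cauchy estimate and checking the measurability needed for Fubini, so that the holomorphy is genuinely converted into the uniform bound. Once the local mean-square estimate is in hand this is routine, but it is the only place where analyticity is essential.
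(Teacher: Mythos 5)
Your proof is correct, and it is substantially more than the paper provides: the paper's entire proof of Lemma~\ref{Prob3} is a one-line citation to \cite[Proposition~3.2.11]{Ko}, so you have in effect reconstructed the content of that cited result. Your chain of reductions is the standard one and each link holds: the orthonormality $\mathbb{E}^{\m_j}[\omega_j(m)\overline{\omega_j(n)}]=\delta_{mn}$ follows from independence of the coordinates and $\mathbb{E}[\omega_j(p)^k]=\delta_{k0}$ on $S^1$; the pointwise mean-square bound $\sum_{n>X}n^{-2\sigma}\le\sum_{n>X}n^{-2\sigma_1}$ is uniform on $\mathcal{R}$; the subharmonicity of $|g|^2$ converts an area $L^2$ bound on $\hat K_\ell$ into a sup bound on $K_\ell$ once $\rho<\dist(K_\ell,\partial\mathcal{R})$; and the tail truncation $d\le\sum_{\ell\le L}2^{-\ell}d_\ell+2^{-L}$ finishes it. Two small points you should make explicit rather than leave implicit. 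First, the identity $\mathbb{E}^{\m_j}[|\zeta(s,\omega_j)-\zeta_X(s,\omega_j)|^2]=\sum_n(1-\varphi(n/X))^2n^{-2\sigma}$ requires interchanging the expectation with an infinite sum; this is justified because the partial sums of an orthogonal series with square-summable coefficients converge in $L^2(\m_j)$, and the $L^2$ limit coincides a.s.\ with the pointwise limit. Second, the subharmonicity step presupposes that $\zeta(\cdot,\omega_j)$ is a.s.\ holomorphic on $\mathcal{R}$ (equivalently, that the random Dirichlet series converges uniformly on compacta of $\sigma>1/2$ for a.e.\ $\omega_j$); the paper takes this for granted when it declares $\zeta(s,\omega_j)$ an $\mathcal{H}(\mathcal{R})$-valued random element, but in a self-contained write-up it deserves a sentence (e.g.\ via Kolmogorov's theorem or the same $L^2$--subharmonicity argument applied to tails). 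With those remarks added, your argument is a complete and correct substitute for the citation.
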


\begin{proof}

It follows from \cite[Proposition~3.2.11]{Ko}. 

\end{proof}

\begin{proposition} \label{prop1}
The probability measure $\mathcal{Q}_T$ converges weakly to $\mathcal{Q}$ as $T \to \infty$.
\end{proposition}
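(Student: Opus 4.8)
The plan is to follow the standard Bagchi-type scheme: transfer weak convergence from the finite Dirichlet polynomials $\underline{\zeta}_X$ to the full objects by means of the approximation estimates already established. To this end I introduce the two auxiliary probability measures on $(\mathcal{H}^r(\mathcal{R}), \mathcal{B}(\mathcal{H}^r(\mathcal{R})))$,
\[
\mathcal{Q}_{T,X}(A) = \frac{1}{T}\meas\{\tau \in [T, 2T] : \underline{\zeta}_X(s + i\underline{\gamma}(\tau)) \in A\}, \qquad \mathcal{Q}_X(A) = \m^r\{\omega \in \Omega^r : \underline{\zeta}_X(s, \omega) \in A\},
\]
and I would prove three statements: (i) for each fixed $X$, $\mathcal{Q}_{T,X} \to \mathcal{Q}_X$ weakly as $T \to \infty$; (ii) $\mathcal{Q}_X \to \mathcal{Q}$ weakly as $X \to \infty$; and (iii) the two families are uniformly close, after which the conclusion follows from the standard approximation theorem for weak convergence (see \cite{Ko}).

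For (i) I would exploit that $\varphi$ has compact support: there is an integer $N_X$ with $\varphi(n/X) = 0$ for $n > N_X$, so $\zeta_X(s) = \sum_{n \le N_X} \varphi(n/X) n^{-s}$ is a finite Dirichlet polynomial involving only the primes in the finite set $\mathcal{P}_0 := \{p : p \le N_X\}$. Writing $\omega_j(n) = \prod_{p \in \mathcal{P}_0} \omega_j(p)^{\nu(n;p)}$ for $n \le N_X$, the map
\[
u\big((x_{jp})_{p \in \mathcal{P}_0,\, 1 \le j \le r}\big) = \Big( \sum_{n \le N_X} \frac{\varphi(n/X)}{n^s} \prod_{p \in \mathcal{P}_0} x_{jp}^{\nu(n;p)} \Big)_{1 \le j \le r}
\]
defines a continuous map $u : \Omega^r_{\mathcal{P}_0} \to \mathcal{H}^r(\mathcal{R})$, a finite sum depending continuously on the torus coordinates uniformly on compacta. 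Since $\underline{\zeta}_X(s + i\underline{\gamma}(\tau)) = u((p^{-i\gamma_j(\tau)})_{p,j})$ and $\underline{\zeta}_X(s, \omega) = u((\omega_j(p))_{p,j})$, we get $\mathcal{Q}_{T,X} = Q^{\mathcal{P}_0}_T \circ u^{-1}$ and $\mathcal{Q}_X = \m^r_{\mathcal{P}_0} \circ u^{-1}$. Lemma~\ref{Prob2} gives $Q^{\mathcal{P}_0}_T \to \m^r_{\mathcal{P}_0}$ weakly, and the continuous mapping theorem yields (i).

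For (ii), Lemma~\ref{Prob3} asserts $\mathbb{E}^{\m^r}[\underline{d}(\underline{\zeta}(s, \omega), \underline{\zeta}_X(s, \omega))] \to 0$ as $X \to \infty$; by Markov's inequality this gives $\underline{\zeta}_X(\cdot, \omega) \to \underline{\zeta}(\cdot, \omega)$ in $\m^r$-measure, hence $\mathcal{Q}_X \to \mathcal{Q}$ weakly. For (iii), Markov's inequality again gives, for any $\varepsilon > 0$,
\[
\frac{1}{T}\meas\{\tau \in [T, 2T] : \underline{d}(\underline{\zeta}(s + i\underline{\gamma}(\tau)), \underline{\zeta}_X(s + i\underline{\gamma}(\tau))) \ge \varepsilon\} \le \frac{1}{\varepsilon T}\int_T^{2T} \underline{d}(\underline{\zeta}(s + i\underline{\gamma}(\tau)), \underline{\zeta}_X(s + i\underline{\gamma}(\tau)))\, d\tau,
\]
and Lemma~\ref{Prob1} shows the right-hand side tends to $0$ upon taking $\limsup_{T \to \infty}$ followed by $\lim_{X \to \infty}$. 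With (i), (ii) and (iii) in hand, the approximation theorem for weak convergence of \cite{Ko}, applied with $T$ as the principal parameter and $X$ as the approximating one, gives $\mathcal{Q}_T \to \mathcal{Q}$ weakly.

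Since Lemmas~\ref{Prob1}--\ref{Prob3} carry all the analytic and arithmetic weight, the remaining work is mostly bookkeeping. The step demanding the most care is (i): one must genuinely verify that the compact support of $\varphi$ reduces $\zeta_X$ to a finite polynomial in finitely many prime coordinates, and that $u$ is continuous into $\mathcal{H}^r(\mathcal{R})$ equipped with its topology of uniform convergence on compacta, so that the continuous mapping theorem applies. The identification $\mathcal{Q}_{T,X} = Q^{\mathcal{P}_0}_T \circ u^{-1}$ is precisely what bridges the abstract limit theorem of Lemma~\ref{Prob2} and the concrete measures under consideration.
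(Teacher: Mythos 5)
Your proposal is correct and follows essentially the same route the paper intends: the paper's own proof is a one-line appeal to the Portmanteau theorem together with Lemmas~\ref{Prob1}, \ref{Prob2} and \ref{Prob3}, which is precisely the converging-together scheme (via the auxiliary measures $\mathcal{Q}_{T,X}$, $\mathcal{Q}_X$ and the continuous mapping theorem) that you spell out. Your write-up simply supplies the standard details that the paper leaves implicit.
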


\begin{proof}
Using the Portmanteau theorem (see \cite[Theorem~13.16]{Kl}), Lemma~\ref{Prob1}, Lemma~\ref{Prob2}, and Lemma~\ref{Prob3}, 
we can prove this proposition. 

\end{proof}

\section{Proof of main theorem}
Finally, we show the main theorem with a property of the support of the measure $\mathcal{Q}$. 
For probability space $(\Omega, \mathcal{M}, \mathbb{P})$, a minimal closed set $S \subset \Omega$ such that $\mathbb{P}(S) = 1$ is called a support of $\mathbb{P}$.  

Let 
\[
S := \{f \in \mathcal{H}(\mathcal{R}) :\text{ $f(s) \ne 0$ or $f(s) \equiv 0$} \}.
\]
Then, from Reference~\cite{La21Rama}, the support of the measure $\mathcal{Q}$ coincides with $S^r$.

\begin{proof}[Proof of Theorem~\ref{main}]

Let $(\gamma_1, \dots, \gamma_r)$ be admissible.
Let $\mathcal{K}$ be a compact subset contained in the strip $1/2 < \sigma < 1$ with connected complement. 
Then, we take $\mathcal{R}$ with (\ref{definition of R}).  
Assume that $f_1(s), \dots, f_r(s)$ are non-vanishing continuous functions on $\mathcal{K}$ and holomorphic on the interior of $\mathcal{K}$. 
Fix $\varepsilon >0$. 
By Mergelyan's theorem, there exist polynomials $p_1(s), \dots, p_r(s)$ such that
\[
\max_{1 \le j \le r} \sup_{s \in \mathcal{K}} |f_j(s) - e^{p_j(s)}| < \varepsilon.
\]
Here we define an open set of $\mathcal{H}^r(\mathcal{R})$ by 
\[
\Phi(p_1, \dots, p_r) := \left\{(g_1(s), \dots, g_r(s)) \in \mathcal{H}^r(\mathcal{R}) : \max_{1 \le j \le r}\sup_{s \in \mathcal{K}} |g_j(s) - e^{p_j(s)}| < \varepsilon \right\}.
\] 
Applying the Portmanteau theorem, Proposition~\ref{prop1} and a property of support of $\mathcal{Q}$, 
we have  
\begin{align*}
&\liminf_{T \to \infty} \frac{1}{T} \mathrm{meas} \left\{\tau \in [T, 2T] :  \max_{1 \le j \le r} \sup_{s \in \mathcal{K}} |\zeta(s +i\gamma_j(\tau)) - e^{p_j(s)}| < \varepsilon \right\} \\
&= \liminf_{T \to \infty} \mathcal{Q}_T(\Phi(p_1, \dots, p_r)) \ge \mathcal{Q}(\Phi(p_1, \dots, p_r)) >0.
\end{align*}
Now, the inequality
\begin{align*}
&\max_{1 \le j \le r}\sup_{s \in \mathcal{K}} |\zeta(s + i\gamma_j(\tau)) -f_j(s)| \\
&\le \max_{1 \le j \le r} \sup_{s \in \mathcal{K}} |\zeta(s + i\gamma_j(\tau)) - e^{p_j(s)}| + \max_{1 \le j \le r} \sup_{s \in \mathcal{K}} |e^{p_j(s)} - f_j(s)|
\end{align*}
holds. 
Thus, we obtain the conclusion. 

\end{proof}

\appendix
\section{Constructing admissible tuples and examples} \label{ap}

In this section, we introduce constructing admissible tuples and examples of admissible tuples. 

The following lemma can be used to construct an admissible tuple. 

\begin{lemma} \label{ad}
A tuple $(\gamma_1, \cdots, \gamma_r)$ is admissible if $(\gamma_1, \cdots, \gamma_r) \in \mathcal{F}^r$ satisfies one of the following assumptions. 

\begin{enumerate}
    \item [(A1)] $r = 1$.

    \item[(A2)] For $r = 2$, then for any $1 \le i < j \le r$, 
    \begin{equation}       
\lim_{\tau \to \infty} \frac{\gamma''_i(\tau)}{\gamma''_j(\tau)}  = c_{i, j} \in \mathbb{R}_{\ge 0}, 
   \label{limit1}
    \end{equation}
    and 
    \begin{align}
\lim_{\tau \to \infty} (\gamma'_i(\tau) - c_{i, j} \gamma'_j(\tau)) = 
\begin{cases}
+\infty& (c_{i, j} = 0)\\
-\infty & (c_{i, j} \ne 0)
\end{cases}
\label{limit2}
 \end{align}
 hold. 
 In particular, the second limit diverges monotonically. 

 \item[(A3)] For $r \ge 3$, the condition (\ref{limit1}) and (\ref{limit2}) hold, and for all $1 \le i < j < k \le r$, 
 \[
\lim_{\tau \to \infty} \frac{\gamma''_i(\tau) -c_{i,k} \gamma''_k(\tau)}{\gamma''_j(\tau) -c_{j,k} \gamma''_k(\tau)} = 0
\]
hold. 
 
\end{enumerate}

\end{lemma}

\begin{proof}
We can prove this lemma with l'Hôpital's rule.
\end{proof}

For example, by Lemma~\ref{ad}, we can see that 
$(\tau^a, e^\tau)\ (a \ge 1)$, 
$(e^{\tau}, e^{2\tau}, \cdots, e^{r\tau})$, 
$(e^{\tau}, e^{\tau^2}, \cdots, e^{\tau^r})$,   
$(e^\tau, e^{\tau^\tau})$, 
$(e^{2\tau}, e^{2\tau} + e^\tau)$, 
$(e^{2\tau}, e^{2\tau} + e^\tau, \Gamma(\tau))$ are  admissible, where $\Gamma(\tau)$ is the gamma function.  
On the other hand, for instance, $(e^\tau, 2e^\tau)$, $(e^\tau, e^{2\tau}, e^{2\tau} + e^\tau)$ are not admissible according to the definition of admissibility.

We note that if $\gamma \in \mathcal{F}$, then the divergence speed of $\gamma$ is faster than that of some polynomials, as stated in (F3). 
Therefore, examples of shifts considered in \cite{LMS19} -- \cite{LS} and some case of \cite{Pa18} belong to $\mathcal{F}$. 
On the other hand, in Reference~\cite{An}, we also consider shifts as exponential shifts and so on. 
However, for example, $\gamma(\tau) = \exp(\exp(\exp(\tau)))$ does not satisfy their assumptions. 
But $\gamma(\tau) = \exp(\exp(\exp(\tau)))$ belongs to $\mathcal{F}$. 
Therefore, we can consider faster divergence functions than Reference~\cite{An}. 
Finally, we need assumptions (F1) -- (F3) in the proof of Lemma~\ref{mean value} and Lemma~\ref{Prob1}. 

\subsection*{Acknowledgments}
The author would like to thank Professor Sh\={o}ta Inoue and Professor Kohji Matsumoto for their helpful comments. 
This work was financially supported by JST SPRING, Grant Number JPMJSP2125.




\begin{flushleft}
{\footnotesize
{\sc
Graduate School of Mathematics, Nagoya University, Chikusa-ku, Nagoya 464-8602, Japan.
}\\
{\it E-mail address}: {\tt m21029d@math.nagoya-u.ac.jp}
}
\end{flushleft}

\end{document}